\newtheorem{theorem}{Theorem}[]
\newtheorem{lemma}[theorem]{Lemma}
\newtheorem{corollary}[theorem]{Corollary}
\theoremstyle{definition}
\newcommand{\Z}{\mathbf Z}
\newcommand{\Gal}{\mathrm{Gal}}
\newcommand{\Stab}{\mathrm{Stab}}
\newcommand{\Hol}{\mathrm{Hol}}
\newcommand{\Sym}{\operatorname{Sym}}
\newcommand{\GL}{\mathrm{GL}}
\newcommand{\ord}{\mathrm{ord}}
\newcommand{\End}{\operatorname{End}}
\newcommand{\Aut}{\operatorname{Aut}}
\newcommand{\Id}{\operatorname{Id}}
\newcommand{\Syl}{\operatorname{Syl}}
\newcommand{\wL} {{\widetilde{L}}}
\begin{document}

\large
\begin{center}
{\bf Computation of Hopf Galois structures on separable extensions and classification of those for degree twice an odd prime power}

\vspace{1cm}
Teresa Crespo and Marta Salguero

\vspace{0.3cm}
\footnotesize

Departament de Matem\`atiques i Inform\`atica, Universitat de Barcelona (UB), Gran Via de les
Corts Catalanes 585, E-08007 Barcelona, Spain, e-mail: teresa.crespo@ub.edu, msalguga11@alumnes.ub.edu

\end{center}

\date{\today}

\let\thefootnote\relax\footnotetext{{\bf 2010 MSC:} 12F10, 16T05, 33F10, 20B05 \\  Both authors acknowledge support by grant MTM2015-66716-P (MINECO/FEDER, UE).}

\normalsize
\begin{abstract} A Hopf Galois structure on a finite field extension $L/K$ is a pair $(H,\mu)$, where $H$ is a finite cocommutative $K$-Hopf algebra and $\mu$ a Hopf action. In this paper we present a program written in the computational algebra system Magma which gives all Hopf Galois structures on separable field extensions of a given degree and several properties of those. We show a table which summarizes the program results. Besides, for separable field extensions of degree $2p^n$, with $p$ an odd prime number, we prove that the occurrence of some type of Hopf Galois structure may either imply or exclude the occurrence of some other type. In particular, for separable field extensions of degree $2p^2$, we determine exactly the possible sets of  Hopf Galois structure types.

\noindent {\bf Keywords:} Galois theory, Hopf algebra, Hopf action, computational algebra system Magma.
\end{abstract}

\section{Introduction}
A Hopf Galois structure on a finite extension of fields $L/K$ is a pair $(H,\mu)$, where $H$ is
a finite cocommutative $K$-Hopf algebra  and $\mu$ is a
Hopf action of $H$ on $L$, i.e a $K$-linear map $\mu: H \to
\End_K(L)$ giving $L$ a left $H$-module algebra structure and inducing a $K$-vector space isomorphism $L\otimes_K H\to\End_K(L)$.
Hopf Galois structures were introduced by Chase and Sweedler in \cite{C-S}.
For separable field extensions, Greither and
Pareigis \cite{G-P} give the following group-theoretic
equivalent condition to the existence of a Hopf Galois structure.

\begin{theorem}\label{G-P}
Let $L/K$ be a separable field extension of degree $g$, $\wL$ its Galois closure, $G=\Gal(\wL/K), G'=\Gal(\wL/L)$. Then there is a bijective correspondence
between the set of Hopf Galois structures on $L/K$ and the set of
regular subgroups $N$ of the symmetric group $S_g$ normalized by $\lambda (G)$, where
$\lambda:G \hookrightarrow S_g$ is the monomorphism given by the action of
$G$ on the left cosets $G/G'$.
\end{theorem}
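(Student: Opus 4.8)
The plan is to prove Theorem~\ref{G-P} by Galois descent along the finite Galois extension $\wL/K$, which reduces the classification of Hopf Galois structures on $L/K$ to a combinatorial statement over $\wL$. First I would base change to $\wL$: since $L/K$ is separable, the $g$ distinct $K$-embeddings $L\hookrightarrow\wL$ assemble into an isomorphism of $\wL$-algebras $\wL\otimes_K L\xrightarrow{\sim}\operatorname{Map}(X,\wL)$, where $X=G/G'$ is the set of left cosets (equivalently, of those embeddings), on which $G$ acts through $\lambda$; under this identification the natural $\wL$-semilinear action of $G$ on $\wL\otimes_K L$ through the first tensor factor becomes $(\sigma\cdot F)(x)=\sigma\bigl(F(\lambda(\sigma)^{-1}x)\bigr)$, that is, the Galois action on coefficients twisted by $\lambda$. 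Galois descent then says that $-\otimes_K\wL$ is an equivalence between the category of $K$-Hopf algebras together with a Hopf action on $L$ and the category of $\wL$-Hopf algebras together with a Hopf action on $\operatorname{Map}(X,\wL)$ and a compatible semilinear $G$-action; so a Hopf Galois structure on $L/K$ amounts to a Hopf Galois structure on the split extension $\operatorname{Map}(X,\wL)/\wL$ carrying a compatible semilinear $G$-descent datum.

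The second step is to classify the Hopf Galois structures on $A:=\operatorname{Map}(X,\wL)$ over $\wL$, the claim being that these are exactly the group algebras $\wL[N]$ for $N$ a regular subgroup of $\operatorname{Perm}(X)\cong S_g$, acting on $A$ through the tautological permutation action ($n\cdot e_x=e_{nx}$ on the idempotent basis $\{e_x\}$, extended $\wL$-linearly). That each $(\wL[N],\mu_N)$ is a Hopf Galois structure is a direct computation: the canonical map $A\otimes_{\wL}\wL[N]\to\End_{\wL}(A)\cong M_X(\wL)$ sends $e_y\otimes n$ to the matrix unit supported at $(y,n^{-1}y)$, and this family is an $\wL$-basis of $M_X(\wL)$ precisely when $N$ is transitive (giving surjectivity) and free (giving injectivity), that is, regular. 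For the converse — that every Hopf Galois structure on the split algebra has this shape — one invokes the structure theory of finite-dimensional cocommutative Hopf algebras over a field to see that $H$ must be spanned by its group-like elements (the bijectivity condition makes the action $\mu$ injective, while an infinitesimal summand of $H$ would act trivially on the reduced algebra $A=\wL^X$); these group-likes form a group $N$ of order $g$ acting on $\operatorname{Spec}A=X$ by $\wL$-algebra automorphisms, hence $N\le\operatorname{Perm}(X)$, and regularity follows as above.

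The third step is to decide which of the structures $(\wL[N],\mu_N)$ admit a compatible semilinear $G$-descent datum. A semilinear Hopf-algebra semi-automorphism of $\wL[N]$ lying over $\sigma\in G$ is determined by its restriction to the group-likes, a group automorphism $\theta_\sigma\in\Aut(N)$; writing out compatibility with the $G$-action on $A$, namely $\sigma\cdot(n\cdot F)=\theta_\sigma(n)\cdot(\sigma\cdot F)$ for all $F\in A$, and comparing the two resulting permutations of $X$, forces $\theta_\sigma(n)=\lambda(\sigma)\,n\,\lambda(\sigma)^{-1}$ inside $\operatorname{Perm}(X)=S_g$. Thus a compatible datum exists for every $\sigma\in G$ if and only if $\lambda(\sigma)N\lambda(\sigma)^{-1}=N$ for every $\sigma$, that is, if and only if $\lambda(G)$ normalizes $N$; and when it exists it is unique, the cocycle identity being automatic since $\lambda$ is a homomorphism. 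Combined with the equivalence of the first step, this yields the claimed bijection.

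I expect the main obstacle to be the converse half of the second step: showing that over $\wL$ the Hopf Galois structures on a split separable algebra are all group algebras of regular permutation groups, since the matrix-unit computation by itself only exhibits these as examples. The descent bookkeeping in the first and third steps is routine, but must be carried out carefully — in particular to keep the twist by $\lambda$ straight, which is exactly what makes the final condition read ``$N$ is normalized by $\lambda(G)$'' and identifies the descended Hopf algebra as $\bigl(\wL[N]\bigr)^{G}$.
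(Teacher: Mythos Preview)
The paper does not contain a proof of Theorem~\ref{G-P}: it is quoted in the introduction as the theorem of Greither and Pareigis, with a citation to \cite{G-P}, and used as a black box thereafter. So there is no argument in the paper to compare yours against.

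Your outline is in fact the standard Greither--Pareigis strategy: base-change along $\wL/K$ to replace $L$ by the split \'etale $\wL$-algebra $\operatorname{Map}(X,\wL)$, classify Hopf Galois structures on that, and then read off the descent datum as the condition that $\lambda(G)$ normalize $N$. Steps one and three are fine as written. The only place that is thin is the converse half of step two. Your appeal to ``the structure theory of finite-dimensional cocommutative Hopf algebras over a field'' plus the remark that infinitesimal pieces act trivially on reduced algebras is morally right, but as stated it presupposes a splitting of $H\otimes_K\wL$ into a grouplike part and a connected (infinitesimal) part, and $\wL$ is in general neither algebraically closed nor perfect, so that splitting is not automatic. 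Greither and Pareigis avoid this by a more hands-on argument: the module-algebra axioms, applied to the complete system of primitive idempotents $\{e_x\}$ of $\operatorname{Map}(X,\wL)$, force the (co)action to permute the $e_x$, which directly exhibits a regular $N\le\operatorname{Perm}(X)$ with $H\otimes_K\wL\cong\wL[N]$ by a dimension count. If you prefer your formulation, either insert an extra base change to an algebraic closure (and then descend back to $\wL$), or supply that idempotent-permutation step explicitly.
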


For a given Hopf Galois structure on a separable field extension $L/K$ of degree $g$, we will refer to the isomorphism class of the corresponding group $N$ as the type of the Hopf Galois
structure. The Hopf algebra $H$ corresponding to a regular subgroup $N$ of $S_g$ normalized by $\lambda (G)$ is the Hopf subalgebra $\wL[N]^G$ of the group algebra $\wL[N]$ fixed under the action of $G$, where $G$ acts on $\wL$ by $K$-automorphisms and on $N$ by conjugation through $\lambda$. The Hopf action is induced by $n \mapsto n^{-1}(\overline{1})$, for $n \in N$, where we identify $S_g$ with the group of permutations of $G/G'$ and $\overline{1}$ denotes the class of $1_G$ in $G/G'$. It is known that the Hopf subalgebras of $\wL[N]^G$ are in 1-to-1 correspondence with the subgroups of $N$ stable under the action of $G$ (see e.g. \cite{CRV} Proposition 2.2) and that, given two regular subgroups $N_1, N_2$ of $S_g$ normalized by $\lambda (G)$, the Hopf algebras $\wL[N_1]^G$ and $\wL[N_2]^G$ are isomorphic if and only if the groups $N_1$ and $N_2$ are $G$-isomorphic.

Childs \cite{Ch1} gives an equivalent  condition to the existence of a Hopf Galois structure introducing the holomorph of the regular subgroup $N$ of $S_g$. We state the more precise formulation of this result due to Byott \cite{B} (see also \cite{Ch2} Theorem 7.3).

\begin{theorem}\label{theoB} Let $G$ be a finite group, $G'\subset G$ a subgroup and $\lambda:G\to \Sym(G/G')$ the morphism given by the action of
$G$ on the left cosets $G/G'$.
Let $N$ be a group of
order $[G:G']$ with identity element $e_N$. Then there is a
bijection between
$$
{\cal N}=\{\alpha:N\hookrightarrow \Sym(G/G') \mbox{ such that
}\alpha (N)\mbox{ is regular}\}
$$
and
$$
{\cal G}=\{\beta:G\hookrightarrow \Sym(N) \mbox{ such that }\beta
(G')\mbox{ is the stabilizer of } e_N\}
$$
Under this bijection, if $\alpha\in {\cal N}$ corresponds to
$\beta\in {\cal G}$, then $\alpha(N)$ is normalized by
$\lambda(G)$ if and only if $\beta(G)$ is contained in the
holomorph $\Hol(N)$ of $N$.
\end{theorem}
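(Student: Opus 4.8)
The strategy is to show that both $\mathcal{N}$ and $\mathcal{G}$ are naturally parametrized by the same set, namely the set $B$ of bijections $\xi\colon N\to G/G'$ with $\xi(e_N)=\overline{1}$ (here $\overline{1}$ is the trivial coset), and then to compose the two parametrizations. Throughout, as in Theorem~\ref{G-P}, I will use that $G'$ is core-free in $G$ (automatic when $\wL$ is a Galois closure), so that $\lambda$ is injective; this is what makes the homomorphisms $\beta$ below injective. Starting from $\alpha\in\mathcal{N}$: since $\alpha(N)$ is regular on $G/G'$, the orbit map $\xi_\alpha\colon n\mapsto\alpha(n)(\overline{1})$ is a bijection $N\to G/G'$ in $B$, and the identity $\alpha(m)(\xi_\alpha(n))=\xi_\alpha(mn)$ shows that conjugation by $\xi_\alpha$, i.e.\ the isomorphism $c_{\xi_\alpha}\colon\Sym(G/G')\to\Sym(N)$, $\sigma\mapsto\xi_\alpha^{-1}\sigma\xi_\alpha$, carries $\alpha(N)$ onto the image $\lambda_N(N)$ of the left regular representation $\lambda_N\colon N\to\Sym(N)$. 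I then set $\beta_\alpha:=c_{\xi_\alpha}\circ\lambda$. A direct check shows $\beta_\alpha$ is a monomorphism $G\hookrightarrow\Sym(N)$ and that $\beta_\alpha(g)$ fixes $e_N$ exactly when $\lambda(g)$ fixes $\overline{1}$, i.e.\ exactly when $g\in G'$; hence the $e_N$-stabilizer in $\beta_\alpha(G)$ is $\beta_\alpha(G')$, so $\beta_\alpha\in\mathcal{G}$.

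For the converse, given $\beta\in\mathcal{G}$, the stabilizer condition together with $|N|=[G:G']$ forces, by orbit--stabilizer, the orbit of $e_N$ under $\beta(G)$ to be all of $N$; thus $gG'\mapsto\beta(g)(e_N)$ is a well-defined bijection $G/G'\to N$ whose inverse $\xi_\beta$ lies in $B$, and one sets $\alpha_\beta(n):=\xi_\beta\circ\lambda_N(n)\circ\xi_\beta^{-1}$. Since conjugation preserves regularity and $\lambda_N(N)$ is regular, $\alpha_\beta\in\mathcal{N}$. That $\alpha\mapsto\beta_\alpha$ and $\beta\mapsto\alpha_\beta$ are mutually inverse rests on the elementary fact used above: an element of a regular (resp.\ of the regular) permutation group is determined by the image of one chosen point, so $\alpha$ and $\beta$ are each recovered from the corresponding element of $B$ by the displayed formulas, and those formulas are inverse to one another. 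Concretely, $\alpha$ and $\beta$ correspond precisely when $\xi_\alpha=\xi_\beta$, that is, when $\beta(g)(e_N)$ is the unique $n\in N$ with $\alpha(n)(\overline{1})=gG'$ for every $g$.

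It remains to prove the last assertion. By construction $c_{\xi_\alpha}$ sends $\alpha(N)$ to $\lambda_N(N)$ and $\lambda(G)$ to $\beta_\alpha(G)$ simultaneously; since $c_{\xi_\alpha}$ is an isomorphism of symmetric groups, $\lambda(G)$ normalizes $\alpha(N)$ in $\Sym(G/G')$ if and only if $\beta_\alpha(G)$ normalizes $\lambda_N(N)$ in $\Sym(N)$, i.e.\ if and only if $\beta_\alpha(G)\subseteq\Norm_{\Sym(N)}(\lambda_N(N))$. One then identifies this normalizer with $\Hol(N)=N\rtimes\Aut(N)$: the inclusion $\Hol(N)\subseteq\Norm_{\Sym(N)}(\lambda_N(N))$ is immediate from $\phi\,\lambda_N(m)\,\phi^{-1}=\lambda_N(\phi(m))$ for $\phi\in\Aut(N)$, and for the reverse inclusion one checks that a permutation normalizing $\lambda_N(N)$ and fixing $e_N$ must be a group automorphism of $N$, so every normalizing permutation is a left translation times an automorphism. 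Combining, $\lambda(G)$ normalizes $\alpha(N)$ iff $\beta_\alpha(G)\subseteq\Hol(N)$.

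The steps are routine but a few points need care, and these are where I expect the only friction: keeping conventions consistent (left versus right translations, $n$ versus $n^{-1}$) across all the identifications; the clean verification that the two constructions invert each other; and the role of the core-free hypothesis, without which $\mathcal{G}$ is empty while $\mathcal{N}$ need not be. None of these is a genuine obstacle — the real content is the single observation that an element of $\mathcal{N}$ or of $\mathcal{G}$ is nothing but a basepoint-preserving bijection $N\leftrightarrow G/G'$ in disguise, after which the argument is formal.
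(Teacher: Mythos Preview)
The paper does not give its own proof of this theorem: it is quoted from the literature, attributed to Byott \cite{B} (with the pointer to \cite{Ch2}, Theorem~7.3), so there is no in-paper argument to compare against. Your outline is correct and is essentially Byott's proof: both $\mathcal N$ and $\mathcal G$ are parametrized by basepoint-preserving bijections $N\leftrightarrow G/G'$, and the normalization condition transports through that bijection to the statement $\beta(G)\subseteq\Norm_{\Sym(N)}(\lambda_N(N))=\Hol(N)$.

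Two small remarks. First, your flag about the core-free hypothesis is well placed: the theorem as stated in the paper writes $\lambda\colon G\to\Sym(G/G')$ without an injectivity arrow, but $\beta$ is written as an embedding; the bijection $\mathcal N\leftrightarrow\mathcal G$ only makes sense once $G'$ is core-free (equivalently $\lambda$ is faithful), which is automatic in the Hopf--Galois setting because $\wL$ is the Galois closure. Second, the paper's algorithm (Step~4 of the Main Function) implicitly uses exactly the map $\alpha\leftrightarrow\beta$ that you wrote down via $\xi_\alpha$, so while the paper does not prove the theorem, your reconstruction matches how it is being used.
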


\noindent
{\bf Notation.} In the sequel, $L/K$ will denote a finite separable field extension, $\wL$ the normal closure of $L/K$, $G$ the Galois group $\Gal(\wL/K)$, $G'$ the Galois group $\Gal(\wL/L)$.

\vspace{0.2cm}

In \cite{CS1}, we presented a program written in the computational algebra system Magma which determines all Hopf Galois structures on a separable field extension of a given degree $g$. It was built on Theorem \ref{G-P} and was effective up to degree 11.
In this paper we present a program based on Theorem \ref{theoB} which determines all Hopf Galois structures on a separable field extension up to degree 31 and uses the Magma database of transitive groups which derives from the classification given in \cite{Hu}. We note that the bound on the degree is imposed by the fact that transitive groups are not classified beyond degree 31 and not by the effectiveness of our program. As for the one in \cite{CS1}, the program presented here distinguishes almost classically Galois structures, decides for the remaining ones if the Galois correspondence is bijective and classifies the Hopf Galois structures in Hopf algebra isomorphism classes. In Section \ref{algorithm} we give a description of the program and in Section \ref{table} we show a table which summarizes the obtained results. The Magma code and the output of the program for each degree can be found in \cite{CSw}.

In \cite{CS2} we studied Hopf Galois structures on a separable field extension of degree $p^n$, for $p$ an odd prime, $n\geq 2$. In this paper we consider separable field extensions of degree $2p^n$. In Section \ref{2pn}, we consider the general case $n\geq 2$. Theorems \ref{2pn1} and \ref{2pn2} concerns Hopf Galois structure types on a separable field extension of degree $2p^n$. The first one proves that the occurrence of cyclic type implies the occurrence of dihedral type and the second one that the occurrence of dihedral type excludes the occurrence of any type of exponent smaller that $2p^n$.

\vspace{0.2cm}
In Section \ref{section2p2} we study in more detail the case $n=2$. We describe the five groups of order $2p^2$ and determine the corresponding automorphism groups. In Theorem \ref{2p2} we obtain that if a separable field extension of degree $2p^2$ has a Hopf Galois structure whose type is one of the two groups of exponent $2p$ containing an element of order $2p$, then it has a Galois structure of type the only group of exponent $2p$ not containing an element of order $2p$. Finally Corollary \ref{sets} gives all possible sets of Hopf Galois structure types for  separable field extensions of degree $2p^2$.

\vspace{0.2cm}
We note that the results obtained have been intuited by performing an analysis of the outputs of our program.

\section{Description of the computation procedure}\label{algorithm}

Given a separable field extension $L/K$ of degree $g$, $\wL$ its Galois closure, $G=\Gal(\wL/K),$ \newline $G'=\Gal(\wL/L)$, the action of
$G$ on the left cosets $G/G'$ is transitive, hence the morphism $\lambda:G \rightarrow S_g$ identifies $G$ with a transitive subgroup of $S_g$, which is determined up to conjugacy. Moreover, if we enumerate the left cosets $G/G'$ starting with the one containing $1_G$, $\lambda(G')$ is equal to the stabilizer of $1$ in $\lambda(G)$. Therefore considering all separable field extensions $L/K$ of degree $g$ is equivalent to considering all transitive groups $G$ of degree $g$, up to conjugation. We note that these groups have been classified in \cite{Hu} up to $g=31$ and are included in the data base of the program Magma. We shall denote by $gTk$ the group given by Magma as \verb;TransitiveGroup(g,k);.

\vspace{0.2cm}
The program consists in an auxiliary function Automorphisms, a Main Function which determines the Hopf Galois structures and the additional functions giving the properties of these structures. We describe in more detail the two first ones since the others have already been described in \cite{CS1}.

\vspace{0.5cm}
\noindent
{\bf The function Automorphisms}

Given a pair of integers $(g,k)$ this function returns the group $\Aut(G)$ of automorphisms of the group $G=gTk$ and the group $\Aut(G,G')$ of automorphisms of $G$ sending $G'$ to itself. In order to obtain the latter, the function uses the permutation representation of $\Aut(G)$ to obtain a group $P$ of permutations isomorphic to  $\Aut(G)$. It then computes the set \verb;stabims; of images of $G'$ under $\Aut(G)$ and the action of the generators of $\Aut(G)$ on this set. This gives the embedding \verb;act; of $P$ into $\Sym(\verb;stabims;)$ and then the preimage of the stabilizer of 1 in $\Sym(\verb;stabims;)$ is a subgroup $Q$ of $P$ corresponding to $\Aut(G,G')$ by the permutation representation.

\vspace{0.5cm}
\noindent
{\bf The main function}
\begin{enumerate}
\item[Step 0] For each regular subgroup $N$ of $S_g$, we order it so that $n_j(1)=j$, for $1\leq j \leq g$, and compute its embedding $\lambda(N)$ in $S_g$ induced by the action on itself by left translation.
\item[Step 1] Given a transitive group $G$ of degree $g$ and a type of regular subgroups $N$ of $S_g$, we determine the subgroups $G^*$ of $H:=\Hol(N)$ which are isomorphic to $G$ and transitive and such that the stabilizer $\Stab(G^*,1)$ of $1$ in $G^*$ is isomorphic to the stabilizer $G'$ of $1$ in $G$.
\item[Step 2] For each $G^*$ obtained in Step 1, we look for an isomorphism from $G^*$ to $G$ sending $\Stab(G^*,1)$ to $G'$. Let $f$ be the isomorphism from $G^*$ to $G$ provided by Magma. If $|G|=g$, then $G'$ is trivial and $f$ will do. If this is not the case, we compare $f(\Stab(G^*,1))$ to the images of $G'$ by all automorphisms of $G$. If, for some $a \in \Aut G$, we have $f(\Stab(G^*,1))=a(G')$, then $h:=f \circ a^{-1}$ is the wanted isomorphism. Then $\beta=h^{-1}$ is the embedding $\beta$ as in Theorem \ref{theoB}.
\item[Step 3] We order $T:=G/G'$ so that $t_j(1)=j$, for $1\leq j \leq g$.
\item[Step 4] For each pair $(G^*,h)$ obtained in Step 2, we compute the whole set of isomorphisms $G^*$ to $G$ sending $\Stab(G^*,1)$ to $G'$ by composing $h$ with each element in $\Aut(G,G')$. We obtain then all $\beta$'s from $G$ to $\Hol(N)$ as in Theorem \ref{theoB}. For each such $\beta$ we determine the corresponding $\alpha(N)$ as in the proof of Theorem \ref{theoB}. We obtain then all regular subgroups of $S_g$ isomorphic to $N$ and normalized by $\lambda(G)$.
\end{enumerate}

We further determine those Hopf Galois structures for which the Galois correspondence is bijective and partition the set of Hopf Galois structures of a given type in Hopf algebra isomorphism classes with an optimized version of the algorithm presented in \cite{CS1} (see also \cite{CSw}). Previously we compute the embedding $\lambda(G)$ in $S_g$ induced by the action of $G$ by left translation on the set $T$ of cosets of $G$ modulo $G'$ accordingly with the ordering of $T$ in Step 3. Taking into account \cite{CS1} Proposition 6, we know that an almost classically Galois structure lies alone in its isomorphism class. Hence, we put these apart when performing the partition in Hopf algebra isomorphism classes. Furthermore we determine the Hopf Galois structures for which the Galois correspondence is bijective in a more effective way.

\section{Extensions of degree $2p^n$}\label{2pn}

If $N$ is a group of order $2p^n$, with $p$ an odd prime, its unique $p$-Sylow subgroup is either a cyclic group of order $p^n$ or a group of order $p^n$ and exponent $<p^n$. In the first case, $N$ is either a cyclic group or a dihedral group. In the second case, $N$ has exponent $<2p^n$.

\begin{theorem}\label{2pn1} Let $L/K$ be a separable extension of degree $2p^n$, $p$ an odd prime, $n\geq 1$.
If $L/K$ has a Hopf Galois structure of cyclic type, then it has a Hopf Galois structure of dihedral type.
\end{theorem}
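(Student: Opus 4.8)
The plan is to use the Byott correspondence (Theorem~\ref{theoB}) to reduce the claim to a group-theoretic statement about embeddings of $G$ into holomorphs. Suppose $L/K$ has a Hopf Galois structure of cyclic type. Then, taking $N = C_{2p^n}$, there is a regular subgroup $\alpha(N) \subset S_{2p^n}$ normalized by $\lambda(G)$, and by Theorem~\ref{theoB} this corresponds to an embedding $\beta: G \hookrightarrow \Hol(C_{2p^n})$ with $\beta(G')$ the stabilizer of $e_N$, i.e. $\beta(G) \subset \Hol(C_{2p^n})$. To produce a Hopf Galois structure of dihedral type, it suffices (again by Theorem~\ref{theoB}, read in the reverse direction) to produce an embedding $\beta': G \hookrightarrow \Hol(D_{2p^n})$ such that $\beta'(G')$ is the stabilizer of the identity of $D_{2p^n}$; equivalently, to exhibit a regular subgroup of $S_{2p^n}$ isomorphic to $D_{2p^n}$ and normalized by $\lambda(G)$.

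First I would compute the two holomorphs explicitly. Write $C = C_{2p^n} = \langle \sigma \rangle$ and $D = D_{2p^n} = \langle \rho, \tau \mid \rho^{p^n}=\tau^2=1,\ \tau\rho\tau=\rho^{-1}\rangle$. Since $2p^n$ has the cyclic $p$-Sylow part of order $p^n$ and a factor of $2$, one has $\Aut(C_{2p^n}) \cong \Aut(C_{p^n}) \times \Aut(C_2) \cong (\Z/p^n)^\times$, while $\Aut(D_{2p^n})$ is the group of maps $\rho \mapsto \rho^a, \tau \mapsto \rho^b\tau$ with $a \in (\Z/p^n)^\times$, $b \in \Z/p^n$, so $|\Aut(D_{2p^n})| = p^n \varphi(p^n)$. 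The key structural observation I would exploit is that $\Hol(C_{2p^n})$ embeds naturally into $\Hol(D_{2p^n})$: indeed, the unique index-$2$ subgroup of $C_{2p^n}$ and of $D_{2p^n}$ is $C_{p^n}$ in both cases, and one checks that $C_{2p^n} \rtimes \Aut(C_{2p^n}) = C_{p^n} \rtimes ((\Z/p^n)^\times \times C_2)$ sits inside $D_{2p^n} \rtimes \Aut(D_{2p^n})$ by sending the "order $2$" generator of $C_{2p^n}$ to $\tau$ and the inversion automorphism appropriately — more precisely, $\Hol(C_{2p^n})$ and $\Hol(D_{2p^n})$ both contain the common subgroup $C_{p^n} \rtimes (\Z/p^n)^\times$ (the holomorph of $C_{p^n}$) as the subgroup fixing, respectively, the index-$2$ structure, and the translation-by-$\tau$ together with inversion generate a copy of $\Hol(C_{2p^n})$ inside $\Hol(D_{2p^n})$. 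Granting this, the composite $G \stackrel{\beta}{\hookrightarrow} \Hol(C_{2p^n}) \hookrightarrow \Hol(D_{2p^n})$ is an embedding of $G$ into $\Hol(D_{2p^n})$, and I would check that it carries the stabilizer of $e_{C}$ to the stabilizer of $e_{D}$, which gives the desired dihedral-type structure.

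The main obstacle is verifying the embedding $\Hol(C_{2p^n}) \hookrightarrow \Hol(D_{2p^n})$ and, crucially, that under this embedding the point-stabilizer condition of Theorem~\ref{theoB} is preserved — i.e. that $\beta(G')$, which is a subgroup of $\Aut(C_{2p^n})$ fixing $e_C$, lands in a subgroup of $\Hol(D_{2p^n})$ that is precisely the stabilizer of $e_D$ and not merely contained in some larger point-stabilizer or, worse, failing to be a point-stabilizer at all. I would handle this by working concretely: identify the underlying set of both $C_{2p^n}$ and $D_{2p^n}$ with $\Z/p^n \times \Z/2$ as sets, choose the regular representation of $D_{2p^n}$ on this set, note that left translations by $C_{p^n} \le D_{2p^n}$ plus the automorphisms $\rho \mapsto \rho^a$ of $D_{2p^n}$ reproduce $\Hol(C_{p^n})$ acting on $\Z/p^n$ diagonally in the first coordinate, and verify that the element $\tau$ (left translation) together with an order-$2$ automorphism of $C_{2p^n}$ can be realized inside $\Hol(D_{2p^n})$ fixing $e_D = (0,0)$ exactly when they fixed $e_C$. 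An alternative, perhaps cleaner route avoiding explicit coordinates is to observe that $N_1 = C_{2p^n}$ and $N_2 = D_{2p^n}$ share the same (necessarily characteristic, hence $G$-stable) subgroup $C_{p^n}$ of index $2$, and to build $\alpha'(D_{2p^n})$ from $\alpha(C_{2p^n})$ by keeping the regular action of the index-$2$ subgroup $\alpha(C_{p^n})$ unchanged and replacing the action of the remaining coset by its "inverted" version — the element of order $2$ in $C_{2p^n}$ gets composed with inversion on the $C_{p^n}$-part — and then checking this new subgroup is still regular and still normalized by $\lambda(G)$ because $\lambda(G)$ normalizes both $\alpha(C_{p^n})$ and the inversion automorphism (the latter because $G \subset \Hol(C_{2p^n})$ already acts on $C_{p^n}$ through automorphisms). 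This normalization check is the real content and I expect it to be the step requiring the most care.
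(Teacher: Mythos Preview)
Your main strategy --- realize $\Hol(C_{2p^n})$ inside $\Hol(D_{2p^n})$ as permutation groups on the same $2p^n$-element set, then compose with the Byott embedding $\beta$ --- is exactly the paper's approach. The paper carries it out by exhibiting the element $s\varphi\in\Hol(D_{2p^n})$ (where $\varphi$ is the automorphism $s\mapsto rs$, $r\mapsto r$), verifying that $\langle s\varphi\rangle\cong C_{2p^n}$ is regular on $D_{2p^n}$, and then checking that its normalizer in $\Sym(D_{2p^n})$ already has order $2p^{2n-1}(p-1)=|\Hol(C_{2p^n})|$ inside $\Hol(D_{2p^n})$. Your specific suggestion of sending the order-$2$ generator of $C_{2p^n}$ to $\tau$ cannot work as written, since left translation by $\tau$ does not commute with left translation by $\rho$ in $\Hol(D_{2p^n})$; a genuinely mixed element like $s\varphi$ is needed. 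Once such a regular cyclic subgroup is found, your stabilizer worry disappears automatically: the two holomorphs act on the \emph{same} set, so the stabilizer of $e_D$ in the smaller group is just the intersection with $\Aut(D_{2p^n})$.

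Your ``alternative, cleaner route'' is a genuinely different argument and it does work. Writing $c=\alpha(\sigma^{p^n})$ for the order-$2$ translation and $\iota$ for the inversion automorphism inside $N_{S_{2p^n}}(\alpha(C_{2p^n}))\cong\Hol(C_{2p^n})$, the subgroup $D=\langle\alpha(\sigma^2),\,c\iota\rangle$ is dihedral of order $2p^n$ and regular, and one checks directly that \emph{every} element of $\Hol(C_{2p^n})$ normalizes $D$: indeed $\sigma(c\iota)\sigma^{-1}=\sigma^2\cdot c\iota\in D$, while each automorphism fixes both $c$ and $\iota$. Hence $\lambda(G)\subset\Hol(C_{2p^n})$ normalizes $D$ and Greither--Pareigis gives the dihedral structure. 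This dual route (regular $D_{2p^n}$ inside $\Hol(C_{2p^n})$ rather than regular $C_{2p^n}$ inside $\Hol(D_{2p^n})$) is shorter here because $\Hol(C_{2p^n})$ is the smaller group, so no normalizer computation is needed; the paper's direction has the advantage of matching the template it reuses in Theorem~\ref{2p2}, where one must verify that the full normalizer of the auxiliary regular subgroup lands in the target holomorph.
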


\begin{proof}
 We shall see that $\Hol(D_{2p^n})$ contains a subgroup isomorphic to $C_{2p^n}$, acting regularly on $D_{2p^n}$ and such that its normalizer in $\Sym(D_{2p^n})$ is contained in $\Hol(D_{2p^n})$. The result will then follow from Theorem \ref{theoB}. Let us write $D_{2p^n}=\langle r,s \mid r^{p^n}=s^2=1, srs=r^{-1} \rangle$. The automorphism group of $D_{2p^n}$ is generated by the automorphisms $\varphi$ and $\psi$ defined as follows.

$$\begin{array}{cccl} \varphi:&s & \mapsto & rs \\ &r & \mapsto & r \end{array} \quad \begin{array}{cccl} \psi:&s & \mapsto & s \\ &r & \mapsto & r^i \end{array}$$

\noindent
where $i$ has order $p^{n-1}(p-1)$ modulo $p^n$ and $1 \leq i \leq p^n$.  The automorphisms $\varphi$ and $\psi$ have orders $p^n$ and $p^{n-1}(p-1)$, respectively and satisfy $\psi\varphi\psi^{-1}=\varphi^i$. We have then $\Hol(D_{2p^n})=\langle r,s,\varphi,\psi\rangle$, with $r,\varphi$ of order $p^n$ and commuting with each other; $s$ of order $2$ and $\psi$ of order $p^{n-1}(p-1)$ commuting with each other and with the further relations

$$srs=r^{-1}, \varphi s \varphi^{-1}=rs, \psi r \psi^{-1}=r^i, \psi\varphi\psi^{-1}=\varphi^i.$$

Now, the powers of the element $s\varphi$ satisfy

$$(s\varphi)^k = \left\{ \begin{array}{ll} r^{-k/2} \varphi^k & \text{ if \ } k \text{\ is even} \\sr^{-(k-1)/2} \varphi^k & \text{ if \ } k \text{\ is odd} \end{array} \right.$$

\noindent Hence $s\varphi$ has order $2p^n$ and $\langle s\varphi \rangle$ acts regularly on $D_{2p^n}$. Indeed, we have

$$(s\varphi)^k (Id) = \left\{ \begin{array}{ll} r^{-k/2}  & \text{ if \ } k \text{\ is even} \\sr^{-(k-1)/2} & \text{ if \ } k \text{\ is odd} \end{array} \right.$$

We want to see now that the normalizer of $\langle s\varphi \rangle$ in $\Sym(D_{2p^n})$ is included in $\Hol(D_{2p^n})$. We consider the element $h=\varphi^k \psi \in \Hol(D_{2p^n})$, where $k= -(i-1)/2$, if $i$ is odd, and $k=(i+p^n-1)/2$ if $i$ is even. We may check that $h$ has order $p^{n-1}(p-1)$ and satisfy $h(s\varphi)h^{-1}= r^k s \varphi^i \in \langle s\varphi \rangle$. Hence the normalizer of $\langle s\varphi \rangle$ in $\Hol(D_{2p^n})$ has order $2p^{2n-1} (p-1)$ and is then the whole normalizer of $\langle s\varphi \rangle$ in $\Sym(D_{2p^n})$.
\end{proof}

To prove the next theorem we need the following technical lemma.

\begin{lemma}\label{le} Let $n\geq 1$ be an integer number and $p$ be an odd prime number. Then $p+1$ has order $p^{n-1}$ modulo $p^n$. More precisely

$$(p+1)^{p^{n-2}} \equiv 1+p^{n-1} \pmod{p^n}, \quad (p+1)^{p^{n-1}} \equiv 1 \pmod{p^n}.$$

\end{lemma}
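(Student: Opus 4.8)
The plan is to prove the two congruences by induction on $n$, since the order statement follows immediately from them: if $(p+1)^{p^{n-1}}\equiv 1$ but $(p+1)^{p^{n-2}}\equiv 1+p^{n-1}\not\equiv 1\pmod{p^n}$, then the order of $p+1$ modulo $p^n$ divides $p^{n-1}$ but not $p^{n-2}$, hence equals $p^{n-1}$. So everything reduces to establishing
$$(p+1)^{p^{n-2}}\equiv 1+p^{n-1}\pmod{p^n}$$
for all $n\geq 2$ (the case $n=1$ being trivial, as $p+1\equiv 1\pmod p$).

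First I would check the base case $n=2$: here $p^{n-2}=1$, and indeed $p+1\equiv 1+p\pmod{p^2}$. For the inductive step, I would actually prove the slightly stronger statement that
$$(p+1)^{p^{n-2}}\equiv 1+p^{n-1}\pmod{p^{n}},$$
or better, lift it one level: show $(p+1)^{p^{n-2}} = 1 + p^{n-1} + (\text{higher powers of }p)$. Assuming $(p+1)^{p^{n-2}} = 1 + p^{n-1} + a p^{n}$ for some integer $a$ (which is the induction hypothesis, read modulo $p^{n+1}$), I raise both sides to the $p$-th power:
$$(p+1)^{p^{n-1}} = \bigl(1 + p^{n-1}(1+ap)\bigr)^{p} = \sum_{k=0}^{p}\binom{p}{k} p^{k(n-1)}(1+ap)^{k}.$$
The $k=0$ term is $1$, the $k=1$ term is $p\cdot p^{n-1}(1+ap) = p^{n} + ap^{n+1}\equiv p^{n}\pmod{p^{n+1}}$, and for $k\geq 2$ the exponent of $p$ in the $k$-th term is at least $k(n-1)$ when $k=p$ (from $p^{k(n-1)}$ alone) and at least $1 + k(n-1)\geq 1+2(n-1) = 2n-1\geq n+1$ for $2\leq k\leq p-1$ (using $v_p\binom{p}{k}\geq 1$). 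Since $n\geq 2$ gives $2n-1\geq n+1$, and the $k=p$ term contributes $p^{p(n-1)}(1+ap)^p$ with $p(n-1)\geq 2(n-1) = 2n-2\geq n$ — one must be slightly careful here that $p(n-1)\geq n+1$, which holds since $p\geq 3$ forces $p(n-1)\geq 3(n-1) = 3n-3\geq n+1$ iff $2n\geq 4$ iff $n\geq 2$. Hence $(p+1)^{p^{n-1}}\equiv 1 + p^{n}\pmod{p^{n+1}}$, which is exactly the induction hypothesis at level $n+1$.

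The main obstacle — really the only place requiring care — is the bookkeeping of $p$-adic valuations in the binomial expansion: one must simultaneously use $v_p\binom{p}{k}\geq 1$ for $1\leq k\leq p-1$ and the raw valuation $p^{k(n-1)}$ for the extreme term $k=p$, and verify that in every case the valuation strictly exceeds $n$ so that only the $k=0$ and $k=1$ terms survive modulo $p^{n+1}$. The hypothesis $p$ odd (so $p\geq 3$) and $n\geq 2$ are both used precisely to make these inequalities go through; for $p=2$ or $n=1$ the argument degenerates, consistent with the fact that $3$ does not have order $2^{n-1}$ modulo $2^n$ in general. Once the congruence modulo $p^{n+1}$ is in hand, reducing modulo $p^n$ recovers the two displayed congruences and completes the induction.
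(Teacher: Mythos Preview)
Your proof is correct and takes a genuinely different route from the paper's. Where you induct on $n$ --- at each step writing $(p+1)^{p^{n-2}}=1+p^{n-1}+ap^n$, raising to the $p$-th power, and bounding the $p$-adic valuations of the terms in the expansion of $(1+p^{n-1}(1+ap))^p$ --- the paper instead expands $(p+1)^{p^{n-2}}$ \emph{directly} as $\sum_{k}\binom{p^{n-2}}{k}p^k$ and invokes the exact formula $v_p\binom{p^{n-2}}{k}=n-2-v_p(k)$ (cited from \cite{R}) to see that every term with $k\geq 2$ has valuation at least $n$; the second congruence then follows in one line from $(1+p^{n-1})^p\equiv 1\pmod{p^n}$. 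Your approach is more self-contained, needing only the crude bound $v_p\binom{p}{k}\geq 1$ for $1\leq k\leq p-1$ together with the valuation bookkeeping you carry out carefully; the paper's argument is shorter and avoids induction, but leans on an external reference for the valuation identity. Both are standard routes to this classical fact, and your care with the edge case $k=p$ (where $p$ odd and $n\geq 2$ are genuinely needed) is exactly right.
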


\begin{proof} We have $(p+1)^{p^{n-2}}=1+ p^{n-1} +\sum_{k=2}^{p^{n-2}} {p^{n-2} \choose k} p^k$. Now $v_p \left( {p^{n-2} \choose k}\right)=n-2-v_p(k)$ (see \cite{R}), hence $v_p \left( {p^{n-2} \choose k}p^k \right)=n-2+k-v_p(k) \geq n$ for $k\geq 2$. Now $(p+1)^{p^{n-1}} \equiv (1+p^{n-1})^p \equiv 1 \pmod{p^n}$.
\end{proof}

\begin{theorem}\label{2pn2} Let $L/K$ be a separable extension of degree $2p^n$, $p$ an odd prime, $n\geq 1$.
If $L/K$ has a Hopf Galois structure of dihedral type, then it has no Hopf Galois structure of type $N$, for any $N$ of exponent $<2p^n$.
\end{theorem}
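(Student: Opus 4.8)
The plan is to argue by contradiction via the bijection of Theorem~\ref{G-P}, reducing everything to a statement about Sylow $p$-subgroups. I would begin with the elementary observation that a group $N$ of order $2p^n$ has a normal Sylow $p$-subgroup $P_N$ of order $p^n$, and that $\exp(N)<2p^n$ holds precisely when $P_N$ is non-cyclic; in particular the Sylow $p$-subgroup of $D_{2p^n}$ is the cyclic group $\langle r\rangle$ of order $p^n$, whereas the Sylow $p$-subgroup of any type $N$ with $\exp(N)<2p^n$ is a non-cyclic group of order $p^n$.

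Assume now that $L/K$ has a Hopf Galois structure of dihedral type and also one of type $N$ with $\exp(N)<2p^n$. By Theorem~\ref{G-P} there are regular subgroups $M\cong D_{2p^n}$ and $\widetilde N\cong N$ of $S_{2p^n}$, both normalized by $\lambda(G)$. Their Sylow $p$-subgroups $R\le M$ and $S\le\widetilde N$ are characteristic, hence normalized by $\lambda(G)$; since $M$ and $\widetilde N$ act regularly, $R$ and $S$ act semiregularly with exactly two orbits of size $p^n$, and each of these orbit partitions is a block system for the transitive group $\lambda(G)$ (being the orbits of a normal subgroup of $\langle\lambda(G),R\rangle$, resp.\ $\langle\lambda(G),S\rangle$). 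I would then argue that these two block systems coincide — this uses that $\lambda(G)\cong G$ has order $2p^n|G'|$ with $|G'|$ a divisor of $p^{2n-1}(p-1)$ (as $|\lambda(G)|$ divides $|\Hol(D_{2p^n})|=2p^{3n-1}(p-1)$), which restricts the possible index-$2$ subgroups that can occur as block stabilizers. Fixing a common block $A$ of size $p^n$, the block stabilizer acts transitively on $A$ and normalizes both $R$ and $S$, so its image $H\le\Sym(A)\cong S_{p^n}$ is a transitive group normalizing a regular cyclic subgroup $R_A\cong C_{p^n}$ and a regular non-cyclic subgroup $S_A\cong S$, both of order $p^n$. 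In particular $H\le\Hol(C_{p^n})=\Norm_{S_{p^n}}(R_A)$.

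The contradiction should then come from the (mostly computational) claim that every regular subgroup of order $p^n$ of $\Hol(C_{p^n})$ is cyclic: the Sylow $p$-subgroup of $\Hol(C_{p^n})$ is $C_{p^n}\rtimes C_{p^{n-1}}$, and one checks — this is where Lemma~\ref{le} enters, to keep track of the order of the automorphism $1+p$ and its powers — that a subgroup of order $p^n$ of $C_{p^n}\rtimes C_{p^{n-1}}$ meeting a point stabilizer trivially must be generated by a single element. Applied to $S_A$ this would force $S_A$, hence $S$, to be cyclic, against $\exp(N)<2p^n$. The main obstacle is placing $S_A$ inside $\Hol(C_{p^n})$ in the first place: a priori $S_A$ is only known to be normalized by $H$, not to normalize $R_A$. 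I would overcome this by comparing, inside $\Sym(A)$, the two $p$-groups $P_0R|_A$ and $P_0S|_A$, where $P_0$ is a Sylow $p$-subgroup of the block stabilizer: both are transitive on $A$ with a regular normal subgroup and both contain $P_0|_A$, and this comparison should pin down $S_A$, up to conjugacy in $\Sym(A)$, as a regular subgroup of the Sylow $p$-subgroup $C_{p^n}\rtimes C_{p^{n-1}}$ of $\Hol(C_{p^n})$, which then completes the argument.
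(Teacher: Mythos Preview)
Your approach is genuinely different from the paper's, but it has two real gaps, and even if these were repaired the reduction would not save any work.

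The paper argues directly via Byott's reformulation (Theorem~\ref{theoB}): it shows (i) that every transitive subgroup of $\Hol(D_{2p^n})$ contains an element of order $p^n$, by an explicit analysis of the Sylow $p$-subgroup $\langle r,\varphi,\chi\rangle$ showing that the elements of order $<p^n$ form a subgroup $F$ with $[F:F\cap\Stab_H(1)]=p^{n-1}$; and (ii) that $\Hol(N)$ has no element of order $p^n$ whenever $\exp(N)<2p^n$, using Kohl's results on automorphisms of non-cyclic $p$-groups. Since $G$ embeds transitively in both holomorphs, this is an immediate contradiction.

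Your first gap is the claim that the two block systems coincide. Block systems with two blocks correspond to index-$2$ subgroups of $\lambda(G)$ containing $\lambda(G')$, and your order argument does not force uniqueness: $|G|$ divides $2p^{3n-1}(p-1)$ and $p-1$ is even, so the $2$-part of $|G|$ may well exceed $2$, and there is no reason a priori why the stabilizers of the $R$-orbit and the $S$-orbit through $1$ should be the same index-$2$ subgroup.

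Your second gap is the one you flag yourself, and your proposed fix does not close it. Knowing that the two $p$-groups $P_0R|_A$ and $P_0S|_A$ are transitive on $A$ and share $P_0|_A$ only places them in a common Sylow $p$-subgroup of $\Sym(A)$, which is enormously larger than $C_{p^n}\rtimes C_{p^{n-1}}$; it does not force $S_A$ into $\Norm_{\Sym(A)}(R_A)$. What would actually yield the contradiction at this stage is the pair of facts ``every transitive subgroup of $\Hol(C_{p^n})$ has an element of order $p^n$'' (so $H$ does) and ``$\Hol(S_A)$ has no element of order $p^n$'' (so $H\le\Norm(S_A)$ cannot). But these are precisely the paper's computations (i) and (ii) one level down, carried out with the same tools (Lemma~\ref{le} and Kohl's theorem). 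So the block-system detour, even if the first gap were closed, buys nothing: you still need the order-$p^n$ element argument, which the paper performs directly on $\Hol(D_{2p^n})$ and $\Hol(N)$ without any reduction.
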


\begin{proof}
If  $L/K$ has a Hopf Galois structure of dihedral type, then $G=\Gal(\wL/K)$ embeds into the holomorph $\Hol(D_{2p^n})$ as a transitive subgroup. We shall prove that every transitive subgroup of $\Hol(D_{2p^n})$ contains an element of order $p^n$. Let us write $D_{2p^n}=\langle r,s \rangle$ and $\Hol(D_{2p^n})=\langle r,s,\varphi,\psi\rangle$, as in the proof of Theorem \ref{2pn1}. We have $|\Aut D_{2p^n}|=p^{2n-1}(p-1), |\Hol(D_{2p^n}|=2p^{3n-1}(p-1)$. We write $H:=\Hol(D_{2p^n})$. A subgroup $G$ of $H$ is transitive if and only if $[G:\Stab_H(1)\cap G]=2p^n$. For a transitive subgroup $G$ of $H$, we have then $|G|=2p^ld$, with $n\leq l \leq 3n-1$, $d\mid p-1$. Let $\Syl(G)$ be a $p$-Sylow subgroup of $G$ (it is unique except for $p=3$, $d=2$). We have the equalities between indices

$$\begin{array}{ll} & [G:\Stab_H(1) \cap \Syl(G)]=[G:\Syl(G)][\Syl(G):\Stab_H(1)\cap \Syl(G)], \\
& [G:\Stab_H(1) \cap \Syl(G)]=[G:\Stab_H(1)\cap G][\Stab_H(1)\cap G:\Stab_H(1)\cap \Syl(G)].\end{array}$$

\noindent Taking into account that $[G:\Syl(G)]$ and $[\Stab_H(1)\cap G:\Stab_H(1)\cap \Syl(G)]$ are prime to $p$ and $[\Syl(G):\Stab_H(1)\cap \Syl(G)]$ is a $p$-power, we obtain

$$G \text{\ transitive \ } \Rightarrow [\Syl(G):\Stab_H(1)\cap \Syl(G)]=p^n.$$

\noindent Let us determine now $\Syl(H)$. We have $|\Syl(H)|=p^{3n-1}$. The elements $r, \varphi, \psi^{p-1}$ belong to $\Syl(H)$. We write $\chi:=\psi^{p-1}$. Now $\langle \varphi, \chi \rangle = \langle \varphi \rangle \rtimes \langle \chi \rangle$ has order $p^{2n-1}$ and $\langle r, \varphi, \chi \rangle = \langle r \rangle \rtimes \langle \varphi ,\chi \rangle$ has order $p^{3n-1}$, hence $\Syl(H)=\langle r, \varphi, \chi \rangle$. Therefore $\Syl(H)\cap \Stab_H(1)=\langle \varphi, \chi \rangle$ has order $p^{2n-1}$. By Lemma \ref{le}, $p+1$ has order $p^{n-1}$ modulo $p^n$, hence we may assume $\chi \varphi \chi^{-1}=\varphi^{p+1}$. We will now characterize the elements of order $p^n$ in $H$. We determine first the elements of order $p^n$ in $\Syl(\Aut D_{2p^n})=\langle \varphi,\chi\rangle$.
We shall prove

\begin{equation}\label{eq1}
\varphi^i \chi^j, 0\leq i \leq p^{n}-1, 0\leq j \leq p^{n-1}-1 \text{\  has order \ }p^n \Leftrightarrow \varphi^i \text{ \  has order \ }p^n
\end{equation}

\noindent By induction, we prove $(\varphi^i \chi^j)^k=\varphi^{i(\sum_{l=0}^{k-1} (p+1)^{lj})}\chi^{kj}$. Since $\chi$ has order $p^{n-1}$, we have in particular $(\varphi^i \chi^j)^{p^{n-1}}=\varphi^{i(\sum_{l=0}^{p^{n-1}-1} (p+1)^{lj})}$. Letting $j=p^a j_0$, with $p\nmid j_0$, we have

$$\sum_{l=0}^{p^{n-1}-1} (p+1)^{lj}=\dfrac{1-(p+1)^{p^{n-1}j}}{1-(p+1)^j}=\dfrac{1-(p+1)^{p^{n+a-1}j_0}}{1-(p+1)^{p^aj_0}}.$$

\noindent By Lemma \ref{le}, we have $(p+1)^{p^{n+a-1}}=1+p^{n+a}+\lambda p^{n+a+1}$, for some integer $\lambda$ and $(p+1)^{p^a}=1+p^{a+1}+\mu p^{a+2}$, for some integer $\mu$, which implies $(p+1)^{p^{n+a-1}j_0}=1+j_0 p^{n+a}+\lambda' p^{n+a+1}$, for some integer $\lambda'$ and $(p+1)^{p^aj_0}=1+j_0p^{a+1}+\mu' p^{a+2}$, for some integer $\mu'$. We obtain then

\begin{equation}\label{eq2}
\sum_{l=0}^{p^{n-1}-1} (p+1)^{lj} \equiv p^{n-1} \pmod{p^n},
\end{equation}

\noindent hence

$$\varphi^{i(\sum_{l=0}^{p^{n-1}-1} (p+1)^{lj})}= 1 \Leftrightarrow p\mid i \Leftrightarrow \varphi^i \text{ \  has order \ } \leq p^n.$$

\noindent We have then proved (\ref{eq1}). Now, for $f \in \Aut(D_{2p^n})$, we have

$$(r^m,f)^k=(r^mf(r^m)\dots f^{k-1}(r^m),f^k),$$

\noindent hence, if $f$ has order $p^n$, then $(r,f)$ has order $p^n$. Now, if $f=\varphi^{pi} \chi^j$, then $(r^m,f)^{p^{n-1}}=(r^{m(\sum_{l=0}^{p^{n-1}-1} (p+1)^{lj})},\Id)$. By (\ref{eq2}), we obtain that $(r^m,\varphi^{pi} \chi^j)$ has order $p^n$ if and only if $r^m$ has order $p^n$ if and only if $p\nmid m$. Together we have obtained $\ord(r^m,\varphi^i \chi^j)=p^n$ if and only if $p\nmid m$ or $p\nmid i$. From this characterization of the elements in $H$ of order $p^n$, we obtain that the subset $F$ of elements of order $<p^n$ in $H$ is a subgroup of $H$ and has order $p^{3n-3}$. Moreover $|F\cap \Stab_H(1)|=p^{2n-2}$, hence $[F:F\cap \Stab_H(1)]=p^{n-1}$.

Let now $G$ be a subgroup of $H$ with no element of order $p^n$. We have then $\Syl(G) \subset F$ and $[\Syl(G):\Stab_H(1)\cap \Syl(G)]\leq [F:F\cap \Stab_H(1)]=p^{n-1}$, hence $G$ is not transitive. We have then proved that every transitive subgroup of $\Hol(D_{2p^n})$ contains an element of order $p^n$.

Let now $N$ be a group of order $2p^n$ and exponent $<2p^n$. We shall prove that $\Hol(N)$ has no elements of order $p^n$. Let $P$ be the unique $p$-Sylow subgroup of the group $N$. We have then $N=P\rtimes C_2$ and $P$ is a non-cyclic group of order $p^n$. If $\varphi \in \Aut N$, then $\varphi(P)=P$. By \cite{Ko}, Corollary 4.3, $\varphi_{|P}^{p^{n-1}}=\Id_P$. For $a$ an element of order 2 in $N$, we have $\varphi(a)=ax$, for some $x \in P$ satisfying $axa=x^{-1}$. In the particular case where $N$ is the direct product of $P$ and $C_2$, the only possibility is $\varphi(a)=a$. Now $\varphi(a)=ax$ implies

$$\varphi^{p^{n-1}}(a)=ax\varphi(x)\varphi^2(x) \dots\varphi^{p^{n-1}-1}(x)$$

\noindent
and by \cite{Ko}, Theorem 4.4, we have $x\varphi(x)\varphi^2(x) \dots\varphi^{p^{n-1}-1}(x)=e$, hence $\varphi^{p^{n-1}}(a)=a$. Since an element $\varphi$ in $\Aut(N)$ is determined by $\varphi_{|P}$ and $\varphi(a)$, we have obtained $\varphi^{p^{n-1}}=\Id_N$ for all $\varphi \in \Aut(N)$. Now, since $|\Syl_p(\Hol(N))|=|\Syl_p(N)||\Syl_p(\Aut(N))|$, we have $\Syl_p(\Hol(N))=P\rtimes \Aut(N)$. For $(y,\varphi) \in P\rtimes \Aut(N)$, we have

$$(y,\varphi)^{p^{n-1}}=(y\varphi(y)\varphi^2(y) \dots\varphi^{p^{n-1}-1}(y),\varphi^{p^{n-1}}).$$

\noindent By \cite{Ko}, Theorem 4.4, we have $y\varphi(y)\varphi^2(y) \dots\varphi^{p^{n-1}-1}(y)=e$ and we have proved above that $\varphi^{p^{n-1}}=\Id_N$. We have then that $\Hol(N)$ has no elements of order $p^n$.

In conclusion, if  $L/K$ has a Hopf Galois structure of dihedral type, we have proved that $G=\Gal(\wL/K)$ contains an element of order $p^n$. Since for $N$ a group of order $2p^n$ and exponent $<2p^n$, $\Hol(N)$ has no elements of order $p^n$, $G$ cannot be a subgroup of $\Hol(N)$, hence $L/K$ has no Hopf Galois structure of type $N$.
\end{proof}

\section{Extensions of degree $2p^2$}\label{section2p2}

Let $p$ denote an odd prime. If $G$ is a group of order $2p^2$, the $p$-Sylow subgroup $Syl_p$ of $G$ is normal in $G$. Hence $G$ is the direct or semidirect product of $Syl_p$ and $C_2$. The abelian groups of order $2p^2$ are $C_{p^2} \times C_2 \simeq C_{2p^2}$ and $C_p \times C_p \times C_2 \simeq C_p \times C_{2p}$.

\begin{enumerate}[1)]
\item If $Syl_p=C_{p^2}$ and $G$ is not abelian, since $\Aut C_{p^2} \simeq (\Z/p^2 \Z)^*$ has a unique element of order 2, there is a unique semidirect product and $G$ is isomorphic to the dihedral group $D_{2p^2}$.
\item If $Syl_p=C_p\times C_p=\langle a \rangle \times \langle b \rangle$ and $G$ is not abelian, since $\Aut (C_p\times C_p) \simeq \GL(2,p)$ has three elements of order 2, up to conjugation, namely $\left(\begin{smallmatrix} -1&0\\0&1 \end{smallmatrix} \right), \left(\begin{smallmatrix} 1&0\\0&-1 \end{smallmatrix} \right), \left(\begin{smallmatrix} -1&0\\0&-1 \end{smallmatrix} \right)$, there are three possible actions of $C_2$ on $Syl_p$:

    $$ \begin{array}{ccc} a & \mapsto & a^{-1} \\ b & \mapsto & b \end{array} , \quad \begin{array}{ccc} a & \mapsto & a \\ b & \mapsto & b^{-1} \end{array}, \quad \begin{array}{ccc} a & \mapsto & a^{-1} \\ b & \mapsto & b^{-1} \end{array}.$$

    \noindent The two first ones give both clearly groups isomorphic to $D_{2p} \times C_p$. The third one gives a group which we denote by $(C_p\times C_p) \rtimes C_2$. This group is not isomorphic to $D_{2p} \times C_p$ since the center of $D_{2p} \times C_p$ has order $p$ whereas the center of $(C_p\times C_p) \rtimes C_2$ is trivial.
\end{enumerate}

There are then 5 groups of order $2p^2$, up to isomorphism. We determine now the automorphism group for each of them.

\begin{enumerate}[1)]
\item $\Aut C_{2p^2} \simeq (\Z/2p^2 \Z)^*$ is cyclic of order $p(p-1)$.
\item For $G=C_p\times C_p \times C_2=\langle a \rangle \times \langle b \rangle \times \langle c \rangle$, $c$ is the unique element of order 2. An element in $\Aut G$ is then given by $a \mapsto a^i b^j, b\mapsto a^k b^l, c \mapsto c$, with $0\leq i,j,k,l \leq p-1, p\nmid il-jk$. We have then $\Aut G \simeq \GL(2,p)$ and $|\Aut G|=(p^2-1)(p^2-p)=p(p+1)(p-1)^2$.
\item For $G=D_{2p} \times C_p$, we write $D_{2p}=\langle r,s|r^p=s^2=1,srs=r^{-1} \rangle$ and denote by $c$ a generator of $C_p$. An automorphism of $G$ is given by $r \mapsto r^i, s \mapsto r^j s, c \mapsto c^k$, with $1\leq i,k\leq p-1, 0\leq j \leq p-1$. We have then $|\Aut G|=p(p-1)^2$.
\item For $G=(C_p\times C_p) \rtimes C_2$, we write $C_p\times C_p=\langle a \rangle \times \langle b \rangle$ and $C_2=\langle c \rangle$. An automorphism of $G$ is given by $a \mapsto a^ib^j, b \mapsto a^k b^l, c \mapsto a^mb^n c$, with $0\leq i,j,k,l,m,n\leq p-1, p\nmid il-jk$. We have then $|\Aut G|=(p^2-1)(p^2-p)p^2=p^3(p+1)(p-1)^2$.
\item For $G=D_{2p^2}=\langle r,s|r^{p^2}=s^2=1,srs=r^{-1} \rangle$, an automorphism is given by $r\mapsto r^i, s \mapsto r^j s$, with $0\leq i,j\leq p^2-1, p\nmid i$. We have then $|\Aut G|=(p^2-p)p^2=p^3(p-1)$.
\end{enumerate}

\begin{theorem}\label{2p2} Let $L/K$ be a separable extension of degree $2p^2$, $p$ an odd prime. If $L/K$ has a Hopf Galois structure of type $C_p\times C_{2p}$  or $C_p \times D_{2p}$, then it has a Hopf Galois structure of type $(C_p\times C_p)\rtimes C_2$.
\end{theorem}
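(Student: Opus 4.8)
The plan is to imitate the proof of Theorem~\ref{2pn1}. Set $M:=(C_p\times C_p)\rtimes C_2$. It suffices to exhibit inside $\Hol(M)$ a regular subgroup isomorphic to $C_p\times C_{2p}$ and another isomorphic to $C_p\times D_{2p}$, each with its normalizer in $\Sym(M)$ contained in $\Hol(M)$. Indeed, if $S\subseteq\Hol(M)$ is regular with $S\cong N_1$, then identifying $M$ with $S$ as sets through the regular action turns $\Norm_{\Sym(M)}(S)$ into $\Hol(S)\cong\Hol(N_1)$ acting on $N_1$, the stabilizer of the base point becoming $\Aut(N_1)$; so if $\wL/K$ has a Hopf Galois structure of type $N_1$, Theorem~\ref{theoB} gives a transitive $G^{*}\subseteq\Hol(N_1)$ with $\Stab(G^{*},e_{N_1})$ matching $G'$, which transports to a transitive subgroup of $\Norm_{\Sym(M)}(S)\subseteq\Hol(M)$ with the analogous stabilizer, and Theorem~\ref{theoB} again produces a Hopf Galois structure of type $M$.

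First I would set up $M$ and $\Hol(M)$ concretely. Write $M=\langle a,b,c\mid a^{p}=b^{p}=c^{2}=1,\ ab=ba,\ cac=a^{-1},\ cbc=b^{-1}\rangle$, so that $P:=\langle a,b\rangle$ is the normal, hence characteristic, $p$-Sylow subgroup; from the computation of $\Aut M$ preceding the theorem one identifies $\Aut M$ with $\F_p^{2}\rtimes\GL(2,p)$, where $(A,v)$ acts on $P$ through $A$ and sends $c\mapsto a^{v_{1}}b^{v_{2}}c$. Elements of $\Hol(M)=M\rtimes\Aut M$ I would write as $(\mu,\phi)$, acting on $M$ by $x\mapsto\mu\,\phi(x)$. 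Two facts to record: $|\Hol(M)|=2p^{5}(p+1)(p-1)^{2}$, and $P^{\mathrm{left}}:=\{(x,\Id):x\in P\}$ is normal in $\Hol(M)$, being the unique $p$-Sylow subgroup of the normal subgroup $M^{\mathrm{left}}=\{(\mu,\Id):\mu\in M\}$.

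For the type $C_p\times C_{2p}$, let $\gamma\in\Aut M$ be $a\mapsto a^{-1},\ b\mapsto b^{-1},\ c\mapsto c$, put $t:=(c,\gamma)$ and $S_1:=\langle P^{\mathrm{left}},t\rangle$. A direct check gives $t^{2}=1$ and $t$ centralizing $P^{\mathrm{left}}$, so $S_1\cong C_p\times C_p\times C_2\cong C_p\times C_{2p}$, and since the orbit of $1_M$ under $S_1$ is $P\cup Pc=M$, $S_1$ is regular. As $S_1$ has a unique involution, namely $t$, and $P^{\mathrm{left}}\trianglelefteq\Hol(M)$, one gets $\Norm_{\Hol(M)}(S_1)=C_{\Hol(M)}(t)$; solving $(\mu,\phi)(c,\gamma)=(c,\gamma)(\mu,\phi)$ then shows $\phi$ must fix $c$ while $\mu$ is unrestricted, so $C_{\Hol(M)}(t)=M^{\mathrm{left}}\rtimes\{(A,0):A\in\GL(2,p)\}$ has order $2p^{2}\,|\GL(2,p)|=|\Hol(C_p\times C_p\times C_2)|$; as $\Norm_{\Sym(M)}(S_1)\cong\Hol(S_1)$ has this same order, the two normalizers coincide and $\Norm_{\Sym(M)}(S_1)\subseteq\Hol(M)$. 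For the type $C_p\times D_{2p}$, let $\gamma'\in\Aut M$ be $a\mapsto a^{-1},\ b\mapsto b,\ c\mapsto c$, and put $t':=(c,\gamma')$, $R:=(b,\Id)$, $C':=(a,\Id)$, $S_2:=\langle R,t',C'\rangle$. Direct computations give $t'^{2}=1$, $t'Rt'=R^{-1}$ and $[t',C']=[R,C']=1$, so $S_2\cong D_{2p}\times C_p$; it is again regular, with $P^{\mathrm{left}}$ as its characteristic $p$-Sylow subgroup and with exactly $p$ involutions, forming $\langle R\rangle t'$. Hence $(\mu,\phi)$ normalizes $S_2$ iff $(\mu,\phi)t'(\mu,\phi)^{-1}\in\langle R\rangle t'$; working this out forces $\phi$ to centralize $\gamma'$, i.e. $\phi=(A,(0,v_2))$ with $A$ diagonal, while $\mu$ stays arbitrary, so $|\Norm_{\Hol(M)}(S_2)|=2p^{2}(p-1)^{2}p=|\Hol(D_{2p}\times C_p)|=|\Norm_{\Sym(M)}(S_2)|$, whence $\Norm_{\Sym(M)}(S_2)\subseteq\Hol(M)$. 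By the reduction above, $\wL/K$ then has a Hopf Galois structure of type $M$ in either case.

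The conceptual part---the reduction through Theorem~\ref{theoB} along the lines of Theorem~\ref{2pn1}---is routine once written down. The main obstacle is the bookkeeping: pinning down $\Aut M\cong\F_p^{2}\rtimes\GL(2,p)$ and then carrying out the two normalizer computations accurately inside the non-abelian group $\Hol(M)$. What makes the argument close is that in each case the normalizer of the constructed regular subgroup, computed already inside $\Hol(M)$, turns out to have the full order of its normalizer in $\Sym(M)$, namely $|\Hol(N_1)|$---which one reads off from the orders of the automorphism groups tabulated just before the theorem.
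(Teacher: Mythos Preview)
Your proposal is correct and follows essentially the same route as the paper: construct inside $\Hol(M)$, with $M=(C_p\times C_p)\rtimes C_2$, a regular copy of each of $C_p\times C_{2p}$ and $C_p\times D_{2p}$ and then check that its normalizer already has the full order $|\Hol(N_1)|$, so Theorem~\ref{theoB} transfers any Hopf Galois structure of type $N_1$ to one of type $M$. Your subgroups $S_1,S_2$ coincide (up to swapping $a\leftrightarrow b$ in the second case) with the paper's $F_1,F_2$; your use of the normality of $P^{\mathrm{left}}$ in $\Hol(M)$ to reduce each normalizer computation to tracking a single involution is a mild streamlining of the paper's ``by computation'' step, but not a different argument.
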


\begin{proof} Let us assume that $L/K$ has a Hopf Galois structure of type $C_p\times C_{2p}$. Then $G=\Gal(\widetilde{L}/K)$ is isomorphic to a transitive subgroup of $\Hol(C_p\times C_{2p})$.  We shall see that $\Hol(C_p\times C_p)\rtimes C_2)$ contains a subgroup isomorphic to $C_p\times C_{2p}$, acting regularly on $(C_p\times C_p)\rtimes C_2$ and such that its normalizer in $\Sym((C_p\times C_p)\rtimes C_2)$ is contained in $\Hol((C_p\times C_p)\rtimes C_2)$. Let us write $(C_p\times C_p)\rtimes C_2=(\langle a \rangle \times \langle b \rangle) \rtimes \langle c \rangle$. An automorphism $\varphi$ of $(C_p\times C_p)\rtimes C_2$ is given by

$$\begin{array}{cccl} \varphi:& a & \mapsto & a^i b^j \\& b & \mapsto & a^k b^l \\& c & \mapsto & a^m b^n c \end{array} \quad p \nmid il-jk.$$

\noindent Let us consider the elements $(a,\Id), (b,\Id), (c,\varphi_1)$ in $\Hol((C_p\times C_p)\rtimes C_2)$, where $\varphi_1$ is defined by $\varphi_1(a)=a^{-1},
\varphi_1(b)=b^{-1}, \varphi_1(c)=c$. We may check that $(a,\Id)$ and $(b,\Id)$ have order $p$, $(c,\varphi_1)$ has order 2 and all three elements commute with each other, hence generate a subgroup $F_1$ isomorphic to $C_p\times C_{2p}$. We may see that the orbit of $1$ under the action of $F_1$ contains $(C_p\times C_p)\rtimes C_2$. Indeed

$$1 \stackrel{(a,\Id)^i}{\longmapsto}a^i \stackrel{(b,\Id)^j}{\longmapsto}a^i b^j \stackrel{(c,\varphi_1)}{\longmapsto}a^{-i} b^{-j} c.$$

\noindent By computation we obtain that the elements in $\Hol((C_p\times C_p)\rtimes C_2)$ which normalize $F_1$ are precisely those of the form $(x,\psi_1)$, where $x$ is any element in $(C_p\times C_p)\rtimes C_2$ and $\psi_1$ is defined by

$$\begin{array}{cccl} \psi_1:& a & \mapsto & a^i b^j \\& b & \mapsto & a^k b^l \\& c & \mapsto & c \end{array} \quad p \nmid il-jk.$$

We obtain then than the normalizer of $F_1$ in  $\Hol((C_p\times C_p)\rtimes C_2)$ has order $2p^2(p^2-1)(p^2-p)$ and is then the whole normalizer of $F_1$ in $\Sym((C_p\times C_p)\rtimes C_2)$. Theorem \ref{theoB} gives then  the wanted result.

Let us assume now that $L/K$ has a Hopf Galois structure of type $C_p\times D_{2p}$. Then $G=\Gal(\widetilde{L}/K)$ is isomorphic to a transitive subgroup of $\Hol(C_p\times D_{2p})$.  We shall see that $\Hol(C_p\times C_p)\rtimes C_2)$ contains a subgroup isomorphic to $C_p\times D_{2p}$, acting regularly on $(C_p\times C_p)\rtimes C_2$ and such that its normalizer in $\Sym((C_p\times C_p)\rtimes C_2)$ is contained in $\Hol((C_p\times C_p)\rtimes C_2)$.

Let us consider the elements $(a,\Id), (b,\Id), (c,\varphi_2)$ in $\Hol((C_p\times C_p)\rtimes C_2)$, where $\varphi_2$ is defined by $\varphi_2(a)=a,
\varphi_2(b)=b^{-1}, \varphi_2(c)=c$. We may check that $(c,\varphi_2)$ has order 2, commutes with $(b,\Id)$ and satisfies $(c,\varphi_2)(a,\Id)(c,\varphi_2)=(a,\Id)^{-1}$. The three elements generate then a subgroup $F_2$ isomorphic to $C_p\times D_{2p}$. We may see that the orbit of $1$ under the action of $F_2$ contains $(C_p\times C_p)\rtimes C_2)$. Indeed

$$1 \stackrel{(a,\Id)^i}{\longmapsto}a^i \stackrel{(b,\Id)^j}{\longmapsto}a^i b^j \stackrel{(c,\varphi_2)}{\longmapsto}a^{-i} b^{j} c.$$

\noindent By computation we obtain that the elements in $\Hol((C_p\times C_p)\rtimes C_2)$ which normalize $F_2$ are precisely those of the form $(x,\psi_2)$, where $x$ is any element in $(C_p\times C_p)\rtimes C_2$ and $\psi_2$ is defined by

$$\begin{array}{cccl} \psi_2:& a & \mapsto & a^i  \\& b & \mapsto & b^l \\& c & \mapsto & a^mc \end{array} \quad p \nmid il.$$

We obtain then than the normalizer of $F_2$ in  $\Hol((C_p\times C_p)\rtimes C_2)$ has order $2p^3(p-1)^2$ and is then the whole normalizer of $F_2$ in $\Sym((C_p\times C_p)\rtimes C_2)$. Again Theorem \ref{theoB} gives the wanted result.
\end{proof}

\begin{corollary}\label{sets} Let $L/K$ be a separable extension of degree $2p^2$, $p$ an odd prime. Then either $L/K$ has no Hopf Galois structures or the set of types of Hopf Galois structures on $L/K$ is one of the following:

$$\begin{array}{c} \{ D_{2p^2} \}, \quad \{ (C_p\times C_p) \rtimes C_2 \}, \quad \{ D_{2p^2}, C_{2p^2} \}, \quad \{ (C_p\times C_p) \rtimes C_2, C_p\times C_{2p} \}, \\ \{ (C_p\times C_p) \rtimes C_2,C_p \times D_{2p} \}, \quad  \{ (C_p\times C_p) \rtimes C_2, C_p\times C_{2p}, C_p \times D_{2p} \}.\end{array}$$

\end{corollary}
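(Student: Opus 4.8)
The plan is to obtain the list by a purely combinatorial argument from Theorems~\ref{2pn1}, \ref{2pn2} and \ref{2p2} specialised to $n=2$, treating the set $S$ of Hopf Galois structure types of $L/K$ as a collection of boolean conditions. First I would record the needed data on the five groups of order $2p^2$: among them $C_{2p^2}$ and $D_{2p^2}$ have exponent $2p^2$, whereas $C_p\times C_{2p}$, $C_p\times D_{2p}$ and $(C_p\times C_p)\rtimes C_2$ all have exponent $2p<2p^2$ (for the last group this uses that an element $(x,c)$ lying outside the $p$-Sylow subgroup squares to the identity, so the only element orders are $1,2,p$). Hence the phrase ``exponent $<2p^2$'' in Theorem~\ref{2pn2} picks out exactly these three groups and not $C_{2p^2}$.

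Next I would restate the three theorems as implications on $S$. Theorem~\ref{2pn1} with $n=2$ gives $C_{2p^2}\in S \Rightarrow D_{2p^2}\in S$. Theorem~\ref{2pn2} with $n=2$ gives $D_{2p^2}\in S \Rightarrow S\cap\{\,C_p\times C_{2p},\ C_p\times D_{2p},\ (C_p\times C_p)\rtimes C_2\,\}=\emptyset$. Theorem~\ref{2p2} gives: if $C_p\times C_{2p}\in S$ or $C_p\times D_{2p}\in S$, then $(C_p\times C_p)\rtimes C_2\in S$.

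Then I would run the case analysis, assuming $S\neq\emptyset$. If $D_{2p^2}\in S$, the second implication forces $S\subseteq\{C_{2p^2},D_{2p^2}\}$, while the first implication forbids $S=\{C_{2p^2}\}$; hence $S=\{D_{2p^2}\}$ or $S=\{D_{2p^2},C_{2p^2}\}$. If $D_{2p^2}\notin S$, then the contrapositive of the first implication gives $C_{2p^2}\notin S$, so $S\subseteq\{\,C_p\times C_{2p},\ C_p\times D_{2p},\ (C_p\times C_p)\rtimes C_2\,\}$; the contrapositive of Theorem~\ref{2p2} shows that if $(C_p\times C_p)\rtimes C_2\notin S$ then also $C_p\times C_{2p}\notin S$ and $C_p\times D_{2p}\notin S$, i.e. $S=\emptyset$, contradicting our assumption. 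So $(C_p\times C_p)\rtimes C_2\in S$, and $S$ consists of $(C_p\times C_p)\rtimes C_2$ together with an arbitrary subset of $\{C_p\times C_{2p},\ C_p\times D_{2p}\}$, which yields the remaining four sets. Together these exhaust the possibilities and give precisely the six displayed sets.

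\textbf{Main obstacle.} There is no real difficulty: the argument is bookkeeping. The only points requiring care are the exponent classification of the five groups (so that Theorem~\ref{2pn2} is applied to the correct three groups, and in particular not to $C_{2p^2}$), and the observation that none of Theorems~\ref{2pn1}, \ref{2pn2}, \ref{2p2} imposes any constraint forcing something else alongside $(C_p\times C_p)\rtimes C_2$, so that singleton set is genuinely permitted. If one additionally wishes each of the six sets to be realised by some extension (not asserted by the corollary as stated), this can be read off from the output of the program of Section~\ref{algorithm}, or by exhibiting for each set a transitive group $G$ of degree $2p^2$ embedding suitably into the relevant holomorphs.
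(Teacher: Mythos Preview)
Your combinatorial elimination is correct and is exactly how the paper handles the ``only these six can occur'' half (you are in fact a bit more careful than the paper, which cites only Theorems~\ref{2pn1} and~\ref{2p2} at that point and leaves the use of Theorem~\ref{2pn2} implicit).

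The substantive difference is that the paper treats the corollary as a complete classification and devotes most of its proof to showing that each of the six sets (and also the empty set) is actually realised for every odd prime $p$. This is done by choosing, for each row, a specific Galois group $G$ (e.g.\ $\Hol(D_{2p^2})$, $\Hol((C_p\times C_p)\rtimes C_2)$, $C_{2p^2}$, $\Hol(C_p\times C_{2p})$, $\Hol(C_p\times D_{2p})$, $C_p\times C_{2p}$) and combining order considerations with two nontrivial arguments: a Sylow comparison showing $\Hol(C_p\times D_{2p})$ does \emph{not} embed in $\Hol(C_p\times C_{2p})$ (their $p$-Sylows are $C_p^3$ versus the Heisenberg group $H_p$), and an explicit regular embedding of $C_p\times C_{2p}$ into $\Hol(C_p\times D_{2p})$. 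You are right that the corollary, read literally, does not assert realisation; but your proposed fallback, reading it off the program of Section~\ref{algorithm}, only settles $p=3$ (the program runs up to degree $31$), so it would not replace the paper's general-$p$ constructions if realisation is wanted.
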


\begin{proof} By Theorems \ref{2pn1} and \ref{2p2} the possibilities not considered in the corollary do not occur. Let us see now that all cases listed do occur by exhibiting an example. We denote again by $\wL$ a Galois closure of $L/K$ and $G=\Gal(\wL/K)$. For $G=S_{2p^2}$, the whole symmetric group in $2p^2$ letters, $L/K$ has no Hopf Galois structures, by \cite{G-P} Corollary 4.8. If for some group $N$ of order $2p^2$, we have either $G=N$ or $G=\Hol(N)$, then $L/K$ has a Hopf Galois structure of type $N$. If for some group $N$ of order $2p^2$, we have $|G|>|\Hol(N)|$, then $L/K$ has no Hopf Galois structures of type $N$ by Theorem \ref{theoB}. These facts, together with Theorems \ref{2pn1} and \ref{2p2}, allows us to obtain the results listed in the following table giving the types $N$ occurring for each $G$ in the first column, except those marked with a question mark.

\vspace{0.5cm}
\begin{center}
\begin{tabular}{|l||c|c|c|c|c|}
\hline
Galois group $\diagdown$ types &  $C_{2p^2}$& $D_{2p^2}$ & $C_p\times C_{2p}$ & $C_p \times D_{2p}$ & $(C_p\times C_p) \rtimes C_2$ \\
\hline
\hline
$G=\Hol(D_{2p^2})$ & No  & Yes & No& No & No \\
\hline
$G=\Hol((C_p\times C_p) \rtimes C_2)$ &  No  & No & No  & No   & Yes \\
\hline
$G=C_{2p^2}$ & Yes & Yes& No & No & No \\
\hline
$G=\Hol(C_p\times C_{2p})$ & No & No & Yes & No  & Yes \\
\hline
$G=\Hol(C_p\times D_{2p})$ & No & No & No ? & Yes & Yes \\
\hline
$G=C_p\times C_{2p}$ & No & No & Yes & Yes ? & Yes \\
\hline
\end{tabular}
\end{center}

\vspace{0.5cm}
We now resolve the question marks.

If $G=\Hol(C_p\times D_{2p})$, let us see that it does not have structures of type  $N:=C_p\times C_{2p}$. Equivalently, we shall prove that $G$ do not embed into $\Hol(N)$. To this end, we observe that the $p$-Sylow groups $\Syl(G)$ and $\Syl(\Hol(N))$ have both order $p^3$. We shall prove that they are not isomorphic. Let us write $N=C_p\times C_p \times C_2=\langle a \rangle \times \langle b \rangle \times \langle c \rangle$, as above. We consider the following elements in $\Hol(N)$: $A=(a,\Id), B=(b^{-1},\Id), C=(1,\varphi)$, where $\varphi$ is defined by $\varphi(a)=ab, \varphi(b)=b, \varphi(c)=c$. These three elements have order $p$ and satisfy $AB=BA, BC=CB, AC=BCA$, hence generate a subgroup of $\Hol(N)$ isomorphic to the Heisenberg group $H_p$ (see \cite{CS2} 3.2). We have then $\Syl(\Hol(N)) \simeq H_p$. Let us write now $D_{2p}\times C_p=\langle r,s,c \rangle$, as above. We consider the elements $(r,\Id), (c,\Id), (1,\psi)$ in $G$, where $\psi$ is defined by $\psi(r)=r,\psi(s)=rs, \psi(c)=c$. These elements have order $p$ and commute with each other, hence generate a subgroup of $G$ isomorphic to $C_p\times C_p \times C_p$. We have then $\Syl(G) \simeq C_p\times C_p \times C_p$.

Finally if $G=C_p\times C_{2p}$, we shall see that $L/K$ has Hopf Galois structures of type $C_p\times D_{2p}$. To this end, we prove that $C_p\times C_{2p}$ embeds regularly in $\Hol(C_p\times D_{2p})$. We write $D_{2p}=\langle r,s|r^p=s^2=1,srs=r^{-1} \rangle$ and denote by $c$ a generator of $C_p$. The elements $(r,\Id)$ and $(c,\Id)$ have order $p$ and commute with each other. The element $(s,\varphi_0)$, where $\varphi_0$ is defined by $\varphi_0(r)=r^{-1}, \varphi_0(s)=s, \varphi_0(c)=c$ has order 2 and commutes with $(r,\Id)$ and $(c,\Id)$, hence the three elements together generate a subgroup isomorphic to $C_p\times C_{2p}$. Moreover, this subgroup is regular since the orbit of $1$ under its action is the whole $(C_p\times D_{2p})$. Indeed, we have

$$1 \stackrel{(r,\Id)^i}{\longmapsto}r^i \stackrel{(c,\Id)^j}{\longmapsto}r^i c^j \stackrel{(s,\varphi_0)}{\longmapsto}sr^{-i} c^{j}=r^i s c^j.$$

\end{proof}

\section{Summary of the program output}\label{table}
Table \ref{fig} is a compendium of the computation results. In it we give for every degree $g$ the total number of transitive groups of degree $g$ and the number Max of transitive groups of degree $g$ whose order does not exceed the order of the holomorphs of all the groups of order $g$; the number of possible types of Hopf Galois structures; the total number of Hopf Galois structures and the number of the almost classically Galois ones; the number of Hopf Galois structures with bijective Galois correspondence and the number of those which are not almost classically Galois; the number of Hopf algebra isomorphism classes in which the Hopf Galois structures are partitioned (which correspond to $G$-isomorphism classes of the corresponding regular groups $N$) and the number of those for Galois extensions (i.e. when $G'=\Gal(\wL/L)$ is trivial); and finally the execution times in seconds and the memory used in megabytes (except for $g=16$ which could not be computed at once).

\newpage
\begin{landscape}
\begin{table}[ht]\label{fig}
\centering
\caption{Summary of results}
\label{fig}
\vspace{0.2 cm}
\begin{tabular}{|c||c|c||c||c|c||c|c||c|c||c||c|}
\hline
\multicolumn{1}{|c||}{\bf Degree} & \multicolumn{2}{|c||}{\bf Transitive Groups} &\multicolumn{1}{|c||}{\bf Types} &\multicolumn{2}{|c||}{\bf HG struct.} &\multicolumn{2}{|c||}{\bf BC} &\multicolumn{2}{|c||}{\bf $G$-iso} & \multicolumn{1}{|c||}{\bf Execution} &\multicolumn{1}{|c|}{\bf Memory} \\
\cline{2-3} \cline{5-10}
 \multicolumn{1}{|c||}{} &  \multicolumn{1}{|c|}{\quad Total\quad} & \multicolumn{1}{|c||}{Max} & \multicolumn{1}{|c||}{} & \multicolumn{1}{|c|}{Total} & \multicolumn{1}{|c||}{a-c}& \multicolumn{1}{|c|}{Total} & \multicolumn{1}{|c||}{not a-c}& \multicolumn{1}{|c|}{Total}& \multicolumn{1}{|c||}{Galois}&\multicolumn{1}{|c||}{{\bf time} (s)}&\multicolumn{1}{|c|}{{\bf used} (MB)}\\ \hline \hline
12&301&129&5&249&56&81&25&165&48&$\approx$ 18& $\approx$ 31 \\ \hline
13&9&6&1&6&6&6&0&6&1&$\approx$ 1 & $\approx$ 18 \\ \hline
14&63&25&2&32&14&19&5&26&6&$\approx$ 2 & $\approx$ 22 \\ \hline
15&104&11&1& 8&8&8&0&8&1&$\approx$ 1 &$\approx$ 22\\ \hline
16&1954&1906&14&49913&2636&9331&6695&26769&6717& -- & -- \\ \hline
17& 10&5&1&5&5&5&0&5&1&$\approx$ 1 &$\approx$ 30\\ \hline
18&983&528& 5&881&123&253&130&525&79&$\approx$ 206&$\approx$ 113\\ \hline
19& 8&6&1&6&6&6&0&6&1&$\approx$ 1& $\approx$ 26\\ \hline
20&1117&170&5&434&79&156&77&266&55&$\approx$ 57&$\approx$ 48\\ \hline
21& 164&26&2&78&22&46&24&42&8&$\approx$ 5& $\approx$ 26\\ \hline
22& 59&18&2&36&14&19&5&26&6&$\approx$ 5& $\approx$ 26\\ \hline
23& 7&4&1&4&4&4&0&4&1&$\approx$ 1 & $\approx$ 18\\ \hline
24& 25000&9738&15&14908&844&2682&1838&8353&1896&$\approx$ 9730& $\approx$ 1327\\ \hline
25& 211&90&2&106&70&74&4&82&12&$\approx$ 32& $\approx$ 175\\ \hline
26& 96&37&2&58&22&35&13&46&6&$\approx$ 12& $\approx$ 27\\ \hline
27& 2392&1547&5&6699&766&1100&334&2030&547&$\approx$ 5375& $\approx$ 500\\ \hline
28& 1854&214&4&388&84&143&59&256&40&$\approx$ 63& $\approx$ 33\\ \hline
29& 8&6&1&6&6&6&0&6&1&$\approx$ 1& $\approx$ 22\\ \hline
30& 5712&483&4&479&99&197&98&373&36&$\approx$ 113& $\approx$ 40 \\ \hline
31& 12&8&1&8&8&8&0&8&1&$\approx$ 1& $\approx$ 22\\ \hline
\end{tabular}
\end{table}
\end{landscape}
\newpage

\section*{Acknowledgments.} We are very thankful to Professor Derek Holt for kindly sending to us the Magma code of the function Automorphisms.

\end{document}